\newtheorem{thm}{Theorem}
\newtheorem{theorem}{Theorem}[section]
\newtheorem{corollary}[theorem]{Corollary}
\newtheorem{lemma}[theorem]{Lemma}
\def\irr#1{{\rm  Irr}(#1)}
\title[Camina pairs]{On Camina pairs and the Fitting subgroup}
\begin{document}

\author[M. L. Lewis]{Mark L. Lewis}
\address{Department of Mathematical Sciences, Kent State University, Kent, OH 44242, USA}
\email{lewis@math.kent.edu}

\author[E. Pacifici]{Emanuele Pacifici}
\address{Dipartimento di Matematica e Informatica ``U. Dini" (DiMaI), Universit\`a degli Studi di Firenze, Viale Morgagni 67/A, 50134 Firenze, Italy}
\email{emanuele.pacifici@unifi.it}

\begin{abstract}  
In this paper, we consider the relationship between the Fitting subgroup and the Camina kernel of a Camina pair.  	
\end{abstract}

\subjclass[2010]{Primary 20C15; Secondary 20D25}

\keywords{Camina pairs, Fitting subgroup, Fitting height}

\thanks{The second author is partially supported by INdAM-GNSAGA. This research is also partially funded by the European Union-Next Generation EU, Missione 4 Componente 1, CUP B53D23009410006, PRIN 2022 2022PSTWLB - Group Theory and Applications.}


\maketitle

\begin{center}
This paper is dedicated to the memory of Francesco De Giovanni.
\end{center}

\section{Introduction}

In this paper, all groups are finite groups.  In \cite{camina}, A. Camina investigated the following condition that generalizes both Frobenius groups and extraspecial groups.  Suppose $N$ is a nontrivial normal subgroup of the group $G$ and the elements in $G \setminus N$ satisfy the condition that $x \in G \setminus N$ is conjugate to every element in the coset $xN$.  These pairs have been studied enough that they have been given the name Camina pairs.  

We say $(G,N)$ is a {\it Camina pair} if $N$ is a proper nontrivial normal subgroup in $G$ so that every element $x \in G \setminus N$ satisfies the condition that $x$ is conjugate to every element in the coset $xN$.  In this situation, we will say that $N$ is a {\it Camina kernel}.  We have written an expository paper \cite{my expos} which collects many of the results regarding Camina pairs.  We will review several key observations in Section \ref{characterize}.  



As we stated above, one motivation for Camina pairs is Frobenius groups, and it turns out that if $G$ is a Frobenius group with Frobenius kernel $N$, then $(G,N)$ is a Camina pair.   As it is known that $G/N$ need not be solvable for Frobenius groups, it follows that $G/N$ is not necessarily solvable for Camina pairs.  In fact, there exist Camina pairs that are not Frobenius groups where $G/N$ is nonsolvable (see \cite{nonsolv} and Theorem 1.20 in \cite{GGLMNT} for nonsolvable examples).


One motivation for writing this paper is our recent work with a number of coauthors where we consider the following generalization of Camina pairs. Given a nontrivial normal subgroup $N$ of the group $G$, we say that $(G,N)$ is an \emph{equal order pair}  if, for every $x\in G\setminus N$, the elements in the coset $xN$ have all the same order (see \cite{eop}). 
In Theorem~A of \cite{eop}, we show that if $(G,N)$ is an equal order pair, then either the Fitting subgroup $F(G) \le N$ or $N \le F (G)$.  This leads to partitioning equal order pairs into a trichotomy: (1) $F (G) < N$, (2) $F (G) = N$, and (3) $N < F(G)$.   

Now, for Camina pairs, there is a trichotomy that was proved by Camina in the original paper (see Theorem 2 of \cite{camina}). If $(G,N)$ is a Camina pair, then one of the following occurs: (1) $G$ is a Frobenius group with Frobenius kernel $N$, (2) $N$ is a $p$-group, or (3) $G/N$ is a $p$-group for some prime $p$.  We first note that unlike the trichotomy for equal order pairs, this trichotomy is not disjoint, although one can be more precise and make these categories disjoint.  Second, we will show that the fact that $F(G) \le N$ or $N \le F(G)$ is a consequence of a more general result about normal subgroups and Camina pairs.   

The point of this paper is to see how the previous trichotomy for Camina pairs lines up with the trichotomy for equal order pairs.  In particular, we prove the following:


\begin{thm} \label{intro-Fitting subgroup}
Let $(G,N)$ be a Camina pair.  Then the following hold:
\begin{enumerate}
\item $N = F(G)$ if and only if $G$ is a Frobenius group with Frobenius kernel $N$.
\item $N < F(G)$ if and only if $N$ is a $p$-group for some prime $p$ and $p$ divides $|G:N|$.  
\item $F(G) < N$ if and only if $G/N$ is a $p$-group for some prime $p$, $p$ divides $|N|$ and $G$ is not a $p$-group.    	
\end{enumerate}	
\end{thm}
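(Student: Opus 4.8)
The plan is to reduce the whole statement to three implications and then quote elementary bookkeeping. Note first that the three conclusions $N<F(G)$, $N=F(G)$, $F(G)<N$ are obviously mutually exclusive and, by the known comparability ($F(G)\le N$ or $N\le F(G)$), exhaustive; and the three right–hand conditions are pairwise incompatible by a short arithmetic check (each of ``$(|N|,|G:N|)=1$'', ``$N$ a $p$-group with $p\mid|G:N|$'', ``$G/N$ a $q$-group with $q\mid|N|$ and $G$ not a $q$-group'' excludes the other two). Hence it suffices to prove: $N<F(G)\Rightarrow$ the hypothesis of (2); $F(G)<N\Rightarrow$ the hypothesis of (3); and $N=F(G)\Rightarrow G$ is Frobenius with kernel $N$; the converses then follow formally. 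The basic tool throughout — presumably the ``more general result'' mentioned in the introduction — is: if $(G,N)$ is a Camina pair and $M\trianglelefteq G$, then $M\le N$ or $N\le M$ (if $x\in M\setminus N$ then $xN\subseteq x^{G}\subseteq M$, so $N\le M$). Taking $M=O_{q}(G)$ gives $O_{q}(G)\le N$ unless $N$ is a $q$-group, so if $N$ is a $p$-group then $F(G)=O_{p}(G)\ge N$, and if $N$ has at least two prime divisors then $F(G)\le N$. I will also use Camina's trichotomy (Theorem~2 of \cite{camina}) in its disjoint refinement, the facts that a coprime Camina pair is Frobenius with kernel $N$ and that a Frobenius kernel is its own Fitting subgroup, and that $(G/M,N/M)$ is again a Camina pair when $M\trianglelefteq G$ and $M<N$ — all from Section~\ref{characterize} or \cite{my expos}.

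The first two implications are quick. If $N<F(G)$, then $N$ must be a $p$-group (else $F(G)\le N$); if moreover $p\nmid|G:N|$, then $N$ is a normal Sylow $p$-subgroup, so $(|N|,|G:N|)=1$, so $G$ is Frobenius with kernel $N$ and $F(G)=N$, a contradiction; hence $p\mid|G:N|$. If $F(G)<N$, Camina's trichotomy applies: $G$ is not Frobenius with kernel $N$ (that forces $F(G)=N$) and $N$ is not a $p$-group (that forces $F(G)\ge N$), so $G/N$ is a $q$-group; coprimality would again force the Frobenius conclusion, so $q\mid|N|$; and $G$ is not a $q$-group, else $F(G)=G>N$. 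For the last implication, assume $N=F(G)$ and apply Camina's trichotomy once more: if $G$ is Frobenius with kernel $N$ we are done; if $N$ is a $p$-group, then $F(G)=O_{p}(G)=N$, and were $p\mid|G:N|$ the refined trichotomy would force $G$ to be a $p$-group (so $F(G)=G>N$, impossible), so $N$ is a Sylow $p$-subgroup, $(|N|,|G:N|)=1$, and $G$ is Frobenius with kernel $N$. This leaves the case $G/N$ a $q$-group with $q\mid|N|$ and $G$ not a $q$-group, which — this is the heart of the matter — I claim is incompatible with $N=F(G)$.

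To exclude that case I would argue in a counterexample $(G,N)$ of least order: $(G,N)$ Camina, $G/N$ a $p$-group, $p\mid|N|$, $G$ not a $p$-group, and $F(G)=N$. Since $N$ has at least two prime divisors, $O_{r}(G)\le N$ for every $r$, hence $O_{r}(G)=O_{r}(N)$ and $F(G)=F(N)$; so $N=F(N)$, i.e.\ $N$ is nilpotent. Write $N=N_{p}\times N_{p'}$ with $N_{p}=O_{p}(N)\ne1$ and $N_{p'}=O_{p'}(N)=O_{p'}(G)\ne1$; as $G/O_{p'}(G)$ is a $p$-group, $G=N_{p'}\rtimes P$ with $P\in\mathrm{Syl}_{p}(G)$ and $N_{p}=N\cap P=O_{p}(G)$. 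If a nontrivial $c\in N_{p'}$ commutes with a $p$-element $x\in G\setminus N$, then $cx\in xN$ is conjugate to $x$, yet $cx=xc$ with $|c|,|x|$ coprime gives $|cx|=|c||x|>|x|$, a contradiction; passing to $p$-parts, every element of $G\setminus N$ centralizes no nontrivial element of $N_{p'}$. Hence $C_{P}(N_{p'})=N_{p}$, and $P/N_{p}\cong G/N$ acts faithfully and fixed-point-freely on $N_{p'}$; in particular $G/N$ is a Frobenius complement, $C_{N_{p'}}(G)=1$, and $Z(G)$ — a normal $p$-subgroup with $Z(G)\cap N_{p'}=1$ — satisfies $1\ne Z(G)\le O_{p}(G)=N_{p}$ (nontriviality because $N_{p}\cap Z(P)\ne1$, and such an element centralizes $P$ and $N_{p'}$, hence $G$). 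Now $(G/Z(G),N/Z(G))$ is a Camina pair with $G/Z(G)$ not a $p$-group, $G/N$ still a $p$-group, and $F(G/Z(G))=N/Z(G)$; unless $Z(G)=N_{p}$ its kernel still has two prime divisors, so it is a smaller counterexample — so minimality forces $Z(G)=N_{p}$, a central subgroup. Finally pass to $(G/N_{p'},N/N_{p'})\cong(P,N_{p})$: since a Camina kernel cannot omit a central element (a central element outside it would be self-conjugate, forcing the kernel trivial), $N_{p}=Z(P)$; thus $(P,Z(P))$ is a Camina pair and $P/Z(P)\cong G/N$ is at once a $p$-group and a Frobenius complement, hence cyclic or generalized quaternion. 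If cyclic, $P$ is abelian and $Z(P)=P$ is not a proper kernel; if generalized quaternion, $P$ realizes a generalized quaternion group as $P/Z(P)$, contradicting the non-capability of generalized quaternion groups. Either way we reach a contradiction, completing the proof.

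The main obstacle, as the outline shows, is this last case — proving $N=F(G)$ is impossible when $G/N$ is a $p$-group but $N$ is not a prime power. The delicate points are the minimal-counterexample reduction to the situation where $O_{p}(G)=Z(G)$ is central (one must verify that all hypotheses, and the equality $F=N$, survive passage to $G/Z(G)$, and that the kernel does not collapse to a prime power), and the fixed-point-free action analysis that feeds into the Frobenius-complement/non-capability contradiction; the non-abelian behaviour of $N_{p'}$ must be checked not to interfere. I am also relying on the disjoint refinement of Camina's trichotomy — in particular that a Camina pair with $N$ a $p$-group and $p\mid|G:N|$ forces $G$ itself to be a $p$-group — being available from \cite{camina} or \cite{my expos}; if it is not, that statement would need its own proof, presumably via a generalized-Fitting-subgroup argument parallel to the one above.
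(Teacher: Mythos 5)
Your reduction of the biconditionals to three forward implications is sound, and your exclusion of the case ``$N=F(G)$ with $G/N$ a $p$-group, $p$ dividing $|N|$, $G$ not a $p$-group'' is a genuinely different route from the paper's: the paper (Lemma \ref{O_p (G)} and Corollary \ref{type 4 proper}) reduces to $P\cap N$ minimal normal and extends a nontrivial linear character of $P\cap N$ through the cyclic/generalized-quaternion quotient to produce a linear character in $\irr{G\mid N}$, which cannot vanish off $N$; you instead reduce to $O_p(G)=Z(G)=Z(P)$ and invoke the non-capability of cyclic and generalized quaternion groups. Both arguments pivot on $G/N$ being a $p$-group Frobenius complement, and yours appears to go through (the capability fact is true and standard, though you should cite or prove it).

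The genuine gap is in your third implication, ``$N=F(G)\Rightarrow G$ Frobenius with kernel $N$,'' in the subcase where $N$ is a $p$-group with $p\mid |G:N|$. You dispose of it by asserting that the refined trichotomy forces $G$ to be a $p$-group there. That assertion is false: the refined trichotomy explicitly contains Category (3), namely $N$ a $p$-group, $p\mid|G:N|$, and $G/N$ \emph{not} a $p$-group, and such Camina pairs exist (the $p$-closed ones characterized in Lemma \ref{type 3 p closed}, the $p$-solvable nonsolvable examples from \cite{GGLMNT}, and the non-$p$-solvable example from \cite{nonsolv}). So what you actually need is that every Type (3) Camina pair satisfies $N<F(G)$ — equivalently, that $N=O_p(G)=F(G)$ is impossible in Type (3) — and this is precisely Theorem \ref{type 3 proper}, the hardest result in the paper. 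There is no elementary ``parallel'' argument: reducing to $N$ minimal normal and using $O_{p'}(G/N)=1$ (from \cite{CM}), one is forced into $O_p(G/N)=1$, so $G/N$ is not $p$-solvable; the odd-$p$ case is killed by an involution inverting $N$ and the fact that complemented Camina pairs are Frobenius, but the case $p=2$ requires the classification-dependent structure theorem of \cite{nonsolv} (the solvable residual of $G/N$ is ${\rm SL}(2,2^n)$ and $N$ is a sum of natural modules) together with the Zsigmondy-prime argument of Lemma \ref{eop ext} showing such an extension is not even an equal order pair. Since your converses for parts (1)--(3) are all derived formally from the three forward implications, this gap propagates to the whole theorem.
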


We would like to thank Alan Camina for some helpful comments while preparing this paper.




\section{Characterizing Camina pairs}\label{characterize}



In this section, we give more background on Camina pairs.  We also give and prove more complete descriptions of some families of Camina pairs than have appeared in the literature.   When $N$ is a normal subgroup of $G$, we write $\irr {G \mid N}$ for the set of irreducible characters of $G$ whose kernels do not contain $N$.  There are several equivalent conditions for the definition of Camina pair (see Lemma 4.1 of \cite{my expos}).  In this paper, we make use of the following equivalent conditions:

\begin{lemma} \label{cam equiv}
Let $N$ be a nontrivial normal subgroup of a group $G$.  Then the following are equivalent:
\begin{enumerate}
\item $(G,N)$ is a Camina pair.
\item If $x \in G \setminus N$, then $|C_G (x)| = |C_{G/N} (xN)|$.
\item For all $\chi \in \irr {G \mid N}$, then $\chi$ vanishes on $G \setminus N$. 
\end{enumerate}
\end{lemma}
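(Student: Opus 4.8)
The plan is to prove the three conditions equivalent by establishing the cycle $(1)\Rightarrow(2)\Rightarrow(3)\Rightarrow(1)$. The equivalence $(1)\Leftrightarrow(2)$ is a statement about conjugacy classes and cosets that uses only the normality of $N$, while the passage between the conjugacy information in $(2)$ and the character-vanishing in $(3)$ is handled through the column (second) orthogonality relations in $G$ and in $G/N$, together with the partition $\irr{G}=\irr{G/N}\,\sqcup\,\irr{G\mid N}$ (where $\irr{G/N}$ is viewed, via inflation, as the set of $\chi\in\irr{G}$ with $N\le\ker\chi$).

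For $(1)\Rightarrow(2)$, fix $x\in G\setminus N$; the Camina condition says $xN\subseteq x^G$. The key observation is that, since $N$ is normal, $x^G$ is then a union of cosets of $N$: if $y=x^g\in x^G$ then $yN=(xN)^g\subseteq(x^G)^g=x^G$. Consequently $x^G$ is exactly the full preimage in $G$ of the class $(xN)^{G/N}$, so $|x^G|=|N|\cdot|(xN)^{G/N}|$; dividing $|G|$ by both sides yields $|C_G(x)|=|C_{G/N}(xN)|$. (Conversely, one could recover $(1)$ from $(2)$ directly by a counting/fiber argument, but the cycle avoids this.)

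For $(2)\Rightarrow(3)$, fix $x\in G\setminus N$. Column orthogonality in $G$ gives $|C_G(x)|=\sum_{\chi\in\irr{G}}|\chi(x)|^2$, and column orthogonality in $G/N$ gives $|C_{G/N}(xN)|=\sum_{\chi\in\irr{G/N}}|\chi(x)|^2$. Subtracting these and invoking $(2)$, we get $\sum_{\chi\in\irr{G\mid N}}|\chi(x)|^2=0$, hence $\chi(x)=0$ for every $\chi\in\irr{G\mid N}$; as $x$ was an arbitrary element of $G\setminus N$, this is $(3)$. For $(3)\Rightarrow(1)$, fix $x\in G\setminus N$ and $n\in N$; both $x$ and $xn$ lie outside $N$, so by $(3)$ every $\chi\in\irr{G\mid N}$ vanishes at both, and therefore $\sum_{\chi\in\irr{G}}\chi(x)\overline{\chi(xn)}=\sum_{\chi\in\irr{G/N}}\chi(xN)\overline{\chi(xN)}=|C_{G/N}(xN)|>0$. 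Since this column-orthogonality sum is nonzero, $x$ and $xn$ are conjugate in $G$; as $n\in N$ was arbitrary, $(1)$ follows.

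I do not expect a serious obstacle: this is essentially a classical fact (a subset of Lemma 4.1 of \cite{my expos}). The only points needing care are the use of the normality of $N$ in $(1)\Rightarrow(2)$ — to see that a conjugacy class containing an $N$-coset must be a union of $N$-cosets — and the correct bookkeeping of $\irr{G}=\irr{G/N}\sqcup\irr{G\mid N}$ alongside the two orthogonality relations; everything else is routine.
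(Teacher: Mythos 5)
Your proof is correct and complete. Note that the paper itself does not prove this lemma; it simply cites Lemma~4.1 of the expository reference, so there is no in-paper argument to compare against. Your cycle $(1)\Rightarrow(2)\Rightarrow(3)\Rightarrow(1)$ is the standard one: the observation that normality of $N$ forces $x^G$ to be the full preimage of $(xN)^{G/N}$ gives $(1)\Rightarrow(2)$ cleanly, and the two uses of the second orthogonality relation (the diagonal version for $(2)\Rightarrow(3)$, the off-diagonal version detecting conjugacy of $x$ and $xn$ for $(3)\Rightarrow(1)$) are exactly right, including the bookkeeping of $\irr{G}$ as the disjoint union of the inflated $\irr{G/N}$ and $\irr{G\mid N}$. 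No gaps.
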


Following \cite{waists}, when $G$ is a group, a {\it waist} for $G$ is a (normal) subgroup $M$ so that every normal subgroup $K$ satisfies that either $K \le M$ or $M \le K$.   It is proved in \cite{waists} that if $(G,N)$ is a Camina pair, then $N$ is a waist for $G$.


As we mentioned in the Introduction, it is known that if $(G,N)$ is a Camina pair, then one of the following occurs (see Theorem 2 of \cite{camina}): (1) $G$ is a Frobenius group with Frobenius kernel $N$, (2) $N$ is a $p$-group for some prime $p$, or (3) $G/N$ is a $p$-group for some prime $p$.  Since these three categories are overlapping, it is useful to refine these categories as follows:

When $(G,N)$ is a Camina pair, it satisfies exactly one of the following:
\begin{enumerate}
\item $G$ is a Frobenius group with Frobenius kernel $N$.
\item $G$ is a $p$-group for some prime $p$.
\item $N$ is a $p$-group and $p$ divides $|G:N|$, but $G/N$ is not a $p$-group for some prime $p$.
\item $G/N$ is a $p$-group and $p$ divides $|N|$, but $N$ is not a $p$-group for some prime $p$.
\end{enumerate}

In their paper \cite{CMS}, Chillag, Mann and Scoppola identify five types of Camina pairs. Now, they do not claim that they have identified all the possible types of Camina pairs, but we can use the ideas implicit in their five types to further refine our Categories (3) and (4).  In Category (3), it is often useful to distinguish between those in which $G$ is $p$-closed and those in which $G$ is not $p$-closed.   We note that Chillag, Mann and Scoppola state in \cite{CMS} that $(G,N)$ is a Camina pair of Type (3) with $G$ $p$-closed if and only if $G$ has a normal Sylow $p$-subgroup $P$ so that $(P,N)$ is a Camina pair and $G$ has a Hall $p$-complement $H$ so that $NH$ is a Frobenius group.  Since we have not found a proof of this result in the literature, we include one here.



\begin{lemma}\label{type 3 p closed}
Let $G$ be a group, let $p$ be a prime, let $N$ be a nontrivial normal $p$-subgroup of $G$, and let $P$ be a Sylow $p$-subgroup of $G$.  Then $(G,N)$ is a Camina pair of Type (3) with $G$ $p$-closed if and only if $P$ is proper and normal in $G$, $(P,N)$ is a Camina pair, and $G$ has a Hall $p$-complement $H$ so that $NH$ is a Frobenius group.  	
\end{lemma}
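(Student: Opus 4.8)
The plan is to prove both implications directly from the definition of Camina pair, using the characterization in Lemma~\ref{cam equiv}(2) that controls centralizer orders. For the forward direction, assume $(G,N)$ is a Camina pair of Type~(3) with $G$ being $p$-closed. Since $N$ is a nontrivial normal $p$-subgroup and $G$ is $p$-closed, $P$ is normal in $G$; and $P$ is proper because $(G,N)$ is not of Type~(2), so $G$ is not a $p$-group. To see that $(P,N)$ is a Camina pair, I would take $x\in P\setminus N$ and note that the Camina condition in $G$ says $x$ is $G$-conjugate to every element of $xN$; since $N\le P\trianglelefteq G$, each such conjugate lies in $P$, and one then checks that $x$ is actually $P$-conjugate to every element of $xN$ — this needs the standard fact that if $x\in P$ is conjugate in $G$ to $y\in P$ via $g$, one can adjust $g$ (using a Hall $p$-complement and coprime action) to lie in $N_G(P)=G$, or more directly that class sizes behave well; I would argue via Lemma~\ref{cam equiv}(2), showing $|C_P(x)| = |C_{P/N}(xN)|$ follows from $|C_G(x)| = |C_{G/N}(xN)|$ together with $|G:P| = |G/N : P/N|$ and the coprimality of $|G:P|$ with $p$. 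For the Hall $p$-complement $H$: such an $H$ exists because $G/P$ is solvable (indeed $G = P\rtimes H$ by Schur–Zassenhaus), and I must show $NH$ is Frobenius, equivalently that no nonidentity element of $H$ centralizes any nonidentity element of $N$. If $h\in H\setminus\{1\}$ centralized $n\in N\setminus\{1\}$, then $h\in C_G(n)$; but $h\notin N$, so by the Camina condition $h$ is conjugate to $hn$, forcing $|C_G(h)| = |C_G(hn)|$, while $n$ centralizing $h$ would let me compare $C_G(h)$ and $C_{G/N}(hN)$ and derive a contradiction with part~(2) of Lemma~\ref{cam equiv}, since $hN$ would have a strictly larger centralizer in $G/N$ than $|C_G(h)|$ allows. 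The cleanest route: $C_{G/N}(hN) \supseteq C_G(h)N/N$ with equality needed, but $n$ centralizing $h$ means nothing extra — instead use that in a Camina pair $C_G(x) \cap N = 1$ for $x \notin N$, a standard consequence which I would invoke or quickly re-derive, immediately giving the Frobenius conclusion.

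For the reverse direction, assume $P$ is proper and normal in $G$, $(P,N)$ is a Camina pair, and $G = PH$ with $H$ a Hall $p$-complement such that $NH$ is Frobenius with kernel $N$. I need to show $(G,N)$ is a Camina pair, and that it is of Type~(3) with $G$ being $p$-closed. The last part is immediate once $(G,N)$ is shown to be a Camina pair: $N$ is a $p$-group, $p \mid |G:N|$ since $p \mid |P:N|$ or $P > N$ (and if $P = N$ then $G = NH$ is Frobenius, Type~(1), which is excluded by requiring $G$ not a $p$-group and... actually I should check whether $P=N$ is possible here — if $P = N$ then $(P,N)$ being a Camina pair is vacuous/degenerate, so implicitly $P > N$), and $G/N$ is not a $p$-group because $H \neq 1$ embeds in it; and $G$ is $p$-closed by hypothesis. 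To verify the Camina condition, take $x\in G\setminus N$ and $n\in N$; I must produce $g\in G$ with $x^g = xn$. Write $x = ah$ with $a\in P$, $h\in H$ (using $G = P\rtimes H$, adjusting by conjugacy so the $H$-part is a fixed complement's element). Split into cases: if $h = 1$ then $x = a\in P\setminus N$ and the Camina pair $(P,N)$ gives the conjugating element inside $P$. If $h\neq 1$, then I use that $NH$ is Frobenius: the coset $xN$, I claim, is a single $\langle x\rangle$- or $N$-conjugacy-controlled class. Concretely, $x^{-1}(xn) = n \in N$, and I want $xn = x^m$ for some $m\in N$, i.e. $n = x^{-1}x^m = [x,m]$; so it suffices to show the map $m \mapsto [x,m]$ from $N$ to $N$ is surjective, equivalently (since $N$ is finite) injective, i.e. $C_N(x) = 1$. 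But $C_N(x) \le C_N(h)$-ish... more carefully, if $m\in N$ centralizes $x = ah$, then since $P\trianglelefteq G$ and $N \le Z$-ish considerations, I reduce to showing $C_N(h') = 1$ for the relevant conjugate $h'$ of $h$, which is exactly the Frobenius property of $NH$. The map $N \to N$, $m \mapsto [m,x]$ need not land in $N$ a priori, but $[m,x] = m^{-1}m^x$ and $m^x \in N$ since $N\trianglelefteq G$, so it does; injectivity is $C_N(x)=1$.

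The main obstacle I anticipate is the reverse direction's surjectivity argument when $x$ has nontrivial $p$-part $a \neq 1$ \emph{and} nontrivial $p'$-part $h \neq 1$ simultaneously: I need $C_N(x) = 1$, and while $C_N(h)$ being trivial comes from $NH$ Frobenius (after conjugating $h$ into $H$), the interaction with the $P$-part $a$ requires care — I would handle it by passing to $\langle x\rangle$ acting coprimely-decomposed, writing $C_N(x) = C_N(a) \cap C_N(h)$ via commuting semisimple/unipotent-style parts of $\langle x\rangle$, and using that $(P,N)$ Camina forces $C_N(a) = C_P(a)\cap N$ with the known fact that in a Camina pair centralizers of outside elements meet the kernel trivially, hence $C_N(a) = 1$ already, so $C_N(x) = 1$. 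The forward direction's subtlety — descending the Camina property from $G$ to $P$ — I expect to be routine via the centralizer-order criterion and coprimality of $|G:P|$, so the real work is organizing the commutator/Frobenius bookkeeping in the converse cleanly.
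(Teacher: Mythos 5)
Your overall route is genuinely different from the paper's: the paper argues character-theoretically (Clifford correspondence, uniqueness of constituents of $\nu^P$ and $\nu^G$, vanishing off $N$ via Lemma~\ref{cam equiv}(3)), while you work with conjugacy classes and centralizer orders via Lemma~\ref{cam equiv}(2). Your forward-direction argument that $(P,N)$ is a Camina pair is correct and, once written out, arguably cleaner than the paper's: since $P\trianglelefteq G$ has $p'$-index, $C_P(x)=C_G(x)\cap P$ is the (normal) Sylow $p$-subgroup of $C_G(x)$ and likewise $C_{P/N}(xN)$ is the Sylow $p$-subgroup of $C_{G/N}(xN)$, so $|C_G(x)|=|C_{G/N}(xN)|$ forces equality of their $p$-parts. (The paper instead gets this from the Frobenius property of $NH$ plus Clifford theory; your argument does not even need $NH$ Frobenius for this step.)

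However, there is a genuine error that you lean on in two essential places: the claim that in a Camina pair $(G,N)$ one has $C_G(x)\cap N=1$ for every $x\in G\setminus N$. This is false. For $G$ extraspecial of order $p^3$ and $N=Z(G)$, the pair $(G,N)$ is a Camina pair and $C_G(x)\supseteq N$ for every $x$. Worse, in your reverse direction the assertion ``$(P,N)$ Camina forces $C_N(a)=1$'' for $a\in P\setminus N$ fails in \emph{every} instance of the lemma: $N$ is a nontrivial normal subgroup of the $p$-group $P$, so $Z(P)\cap N\neq 1$ and hence $C_N(a)\supseteq Z(P)\cap N\neq 1$. Consequently the commutator map $m\mapsto [x,m]$ from $N$ to $N$ is never injective for $x\in P\setminus N$, and your surjectivity argument cannot run for such $x$ (you correctly route those $x$ through the $(P,N)$ Camina hypothesis instead, but then you reuse $C_N(a)=1$ to handle mixed elements $x=ah$, where it is both false and unnecessary). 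Both uses are repairable: in the forward direction you only need $C_N(h)=1$ for a nontrivial $p'$-element $h$, which follows from $|C_G(h)|=|C_{G/N}(hN)|$ together with the coprime-action fact $C_{G/N}(hN)=C_G(h)N/N$ (Schur--Zassenhaus conjugacy inside the preimage of $C_{G/N}(hN)$) -- this is the Chillag--MacDonald argument the paper simply cites; in the reverse direction, for $x\notin P$ you only need $C_N(x)\le C_N(x_{p'})=1$, where $x_{p'}\neq 1$ is the $p'$-part of $x$ (a commuting-parts decomposition inside $\langle x\rangle$, not the factorization $x=ah$ in $P\rtimes H$), conjugated into $H$ and killed by the Frobenius property of $NH$. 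As written, though, the proof rests on a false statement and does not go through.
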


\begin{proof}
Suppose that $(G,N)$ is a Camina pair of Type (3) that is $p$-closed.  Then $P$ is clearly proper and normal in $G$ and properly contains $N$.  Moreover, we know from \cite{CM} that $NH$ is a Frobenius group.  
Consider a character $\phi \in \irr {P \mid N}$.  Fix $\nu \in \irr N$ to be an irreducible constituent of $\phi_N$ and take $\chi \in \irr G$ to be an irreducible constituent of $\phi^G$.  Write $T$ for the stabilizer of $\nu$ in $G$, and since $NH$ is a Frobenius group, we see that $T \le P$.  Because $(G,N)$ is a Camina pair, we know that $\chi$ vanishes on $G \setminus N$.  This implies that $\chi$ is the unique irreducible constituent of $\nu^G$.  Let $\hat\chi \in \irr {T \mid \nu}$ be the Clifford correspondent for $\chi$ with respect to $\nu$.  In light of the Clifford correspondence, $\hat\chi$ will be the unique irreducible constituent of $\nu^T$.  Using the Clifford correspondence in $P$, we see that $(\hat\chi)^P$ is the unique irreducible constituent of $\nu^P$.   By the uniqueness, we have $\phi = (\hat\chi)^P$, and hence, $\phi$ vanishes on $P \setminus N$.  Since $\phi$ was arbitrary in $\irr {P \mid N}$, we conclude that $(P,N)$ is a Camina pair.

Conversely, assume that $P$ is normal and proper in $G$, $(P,N)$ is a Camina pair, and $H$ is a Hall $p$-complemenet of $G$ so that $HN$ is a Frobenius group.  Consider a character $\chi \in \irr {G \mid N}$, and take $\theta$ to be an irreducible constituent of $\chi_P$ and $\nu$ be an irreducible constituent of $\theta_N$.  Observe that $\nu \ne 1_N$.  Since $(P,N)$ is a Camina pair, we know that $\theta$ vanishes on $P \setminus N$, and this implies that $\theta$ is the unique irreducible constituent of $\nu^P$.   Because $NH$ is a Frobenius group, we have that $N$ is the stabilizer of $\nu$ in $NH$.  The uniqueness of $\theta$ as a constituent of $\nu^P$ will imply that $P$ is now the stabilizer of $\theta$ in $PH = G$.  It follows that $\theta^G$ is irreducible, and since $\chi$ is a constituent of $\theta^P$, we obtain $\chi = \theta^G$. 
Let $x \in G \setminus N$.  If $x \in G \setminus P$, then since $\chi$ is induced from $P$, we have $\chi (x) = 0$ and if $x \in P \setminus N$, then since $\chi$ is induced from $\theta$ and $\theta$ vanishes on $P \setminus N$, we see that $\chi (x) = 0$.  This implies that $\chi$ vanishes on $G \setminus N$.  Because $\chi$ was arbitrary, we conclude that $(G,N)$ is a Camina pair of Type (3) with $G$ $p$-closed.
\end{proof}

Obviously, if $G$ is $p$-closed then $G$ is $p$-solvable, so the Camina pairs of Type (3) that are $p$-closed are necessarily $p$-solvable.  On the other hand, the groups in Theorem 1.20 of \cite{GGLMNT} give examples of Camina pairs $(G,M)$ of Type (3) where $M$ is a $p$-group, $G$ is $p$-solvable, but not solvable.  Additionally, in \cite{nonsolv} is  presented an example of a Camina pair $(G,M)$ of Type (3) where $M$ is a $p$-group and $G$ is not $p$-solvable.  In particular, $G$ is not $p$-closed.  We would not be surprised that when $(G,M)$ is a Camina pair of Type (3) where $G$ is not solvable and not  $p$-closed, then $G$ must not be $p$-solvable, but we have not proved this at this time.

Suppose $(G,N)$ is a Camina pair of Type (3) where $G$ is not $p$-closed.  We know from \cite{CM} that if $H$ is a $p'$-subgroup of $G$, and in particular, if $G$ has a Hall $p$-complement $H$, then $HN$ is a Frobenius group.  There is a conjecture in \cite{kuisch} that suggests that $(P,N)$ will not be a Camina pair where $P$ is a Sylow $p$-subgroup of $G$, however we believe this is still an open question.  
Another question that arises is whether one can obtain a characterization of these groups.  At this time, we do not have one.

We now consider the case where $(G,N)$ is a Camina of Type (4). 
In \cite{CMS}, Chillag, Mann and Scoppola show that $(G,P,P \cap N)$ is a Frobenius-Wielandt triple.  We extend this to a necessary and sufficient condition.

\begin{theorem} \label{G/N p-group}
Let $p$ be a prime, let $G$ be a group with a normal subgroup $N$ and Sylow $p$-subgroup $P$.  Then $(G,N)$ is a Camina pair of Type (4) where $G/N$ is a $p$-group 
if and only if $(G,P,P \cap N)$ is a Frobenius-Wielandt triple, $(P,P \cap N)$ is a Camina pair, and $G$ has a normal $p$-complement properly contained in $N$.
\end{theorem}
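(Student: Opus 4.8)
The plan is to reduce both implications to the common structural picture $G = M \rtimes P$, $N = M \rtimes K$, where $M$ is the normal $p$-complement of $G$ and $K = P \cap N \trianglelefteq P$, and then to translate the conditions on the right-hand side into statements about centralizers via Lemma~\ref{cam equiv}. The bookkeeping fact used in both directions is this: if $G = M \rtimes P$ with $M$ a $p'$-group and $C_M(h) = 1$ for every $h \in P \setminus K$, then the $M$-conjugacy class of such an $h$ is the whole coset $hM$ (it lies in $hM$ and has size $|M| = |hM|$), so every $x \in G \setminus N$, written $x = mh$ with $h \in P \setminus K$, is conjugate to $h$; a short computation in $M \rtimes P$ then also gives $C_G(x) = C_G(h) = C_P(h)$.

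For the ``if'' direction: given the three conditions, the modular law yields $N = M(N \cap P) = M \rtimes K$, so $G/N \cong P/K$ is a $p$-group and $p \mid |N|$ (since $K \ne 1$, because $(P,K)$ is a Camina pair); assuming in addition that $G$ is not a $p$-group, so that $M \ne 1$ (otherwise we are trivially in Type~(2) rather than Type~(4)), $N$ is not a $p$-group, and we are in Type~(4) as soon as $(G,N)$ is shown to be a Camina pair. Inside $G = M \rtimes P$, the Frobenius--Wielandt condition ``$P \cap P^g \le K$ for all $g \notin P$'' is exactly ``$C_M(h) = 1$ for every $h \in P \setminus K$''. So, by the bookkeeping fact, for $x = mh \in G \setminus N$ we get $|C_G(x)| = |C_P(h)|$; since $(P,K)$ is a Camina pair, Lemma~\ref{cam equiv} gives $|C_P(h)| = |C_{P/K}(hK)| = |C_{G/N}(xN)|$, and a final appeal to Lemma~\ref{cam equiv} shows $(G,N)$ is a Camina pair.

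For the ``only if'' direction, assume $(G,N)$ is a Camina pair of Type~(4) with $G/N$ a $p$-group. For $x \in G \setminus N$, Lemma~\ref{cam equiv}(2) gives that $|C_G(x)| = |C_{G/N}(xN)|$ is a power of $p$; since $x \in C_G(x)$, the element $x$ is a $p$-element. Hence every element of $G \setminus N$ is a $p$-element, so $N$ contains all $p'$-elements of $G$, and $C_G(u) \le N$ for every nontrivial $p'$-element $u$ (if $g \in C_G(u) \setminus N$, then $g$ is a $p$-element commuting with $u$, forcing $u$ to be a $p$-element). The crux is now to deduce that $G$ has a normal $p$-complement $M$, necessarily nontrivial because $N$ is not a $p$-group; I expect this to be the main obstacle. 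Here I would invoke the theorem of Chillag, Mann and Scoppola (\cite{CMS}) that $(G,P,P \cap N)$ is a Frobenius--Wielandt triple, combine it with the centralizer information above, and apply Frobenius's normal $p$-complement criterion: the same coprime-action argument as in the ``if'' direction shows $N_G(Q)$ is a $p$-group for every $p$-subgroup $Q \not\le N$, so only $p$-subgroups contained in $N$ remain, and for those one must show that a Hall $p'$-subgroup of $N$ is normal in $N$ --- which uses the Camina hypothesis in an essential way, through the fixed-point-free action of $P \setminus (P \cap N)$ on that Hall $p'$-subgroup together with a Frattini-argument analysis of its normalizer. Once $M$ is in hand, $N = M \rtimes K$ with $K = P \cap N$ and $M < N$; the fact that every element of $G \setminus N$ is a $p$-element translates into $C_M(h) = 1$ for all $h \in P \setminus K$, so by the bookkeeping fact $C_G(h) = C_P(h)$, whence $|C_P(h)| = |C_G(h)| = |C_{G/N}(hN)| = |C_{P/K}(hK)|$; by Lemma~\ref{cam equiv} applied inside $P$ this says $(P,P \cap N)$ is a Camina pair, and the equivalence recorded above re-derives the Frobenius--Wielandt property.

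In summary, the only genuinely hard point is obtaining the normal $p$-complement in the ``only if'' direction: the Frobenius--Wielandt structure by itself controls the $p$-local behaviour outside $N$, but bridging to the $p$-nilpotence of $G$ --- equivalently of $N$ --- really requires the Camina hypothesis.
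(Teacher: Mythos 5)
Your ``if'' direction is correct and takes a genuinely different route from the paper's. The paper transfers conjugacy from cosets of the normal $p$-complement $K$ to cosets of $N$ by invoking conditions (4) and (6) of Theorem 3.5 of \cite{burkett}, and separately checks $xN\cap P=y(P\cap N)$; you instead work directly in $G=M\rtimes P$, observe that the Frobenius--Wielandt condition is equivalent to $C_M(h)=1$ for all $h\in P\setminus(P\cap N)$, deduce that the $M$-class of such an $h$ is the full coset $hM$ and that $C_G(h)=C_P(h)$, and close with Lemma~\ref{cam equiv}(2). That computation is sound and more self-contained than the paper's. (Both you and the paper leave the same small loose end about landing in Type~(4) rather than Type~(2); you at least flag the need for $M\neq 1$ explicitly, and $P\cap N\neq 1$ comes from $(P,P\cap N)$ being a Camina pair.)

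The ``only if'' direction has a genuine gap exactly where you predicted: producing the normal $p$-complement. Your sketch does not close it. Frobenius's criterion requires control of $N_G(Q)$ for \emph{every} nontrivial $p$-subgroup $Q$, and for $Q\le N$ you propose to use a Hall $p'$-subgroup of $N$ --- but $N$ is not known to be $p$-solvable (or solvable) at this stage, so such a subgroup is not known to exist; its existence is essentially equivalent to the $p$-nilpotence you are trying to prove. The Frobenius--Wielandt triple $(G,P,P\cap N)$ does not rescue this: since every element of $G\setminus N$ is conjugate into $P\setminus(P\cap N)$, the Wielandt kernel attached to that triple is $N$ itself, which gives no new normal subgroup. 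The paper handles this step by citation: Lemma 2.1(1) of \cite{CMS} or Theorem C of \cite{coprime} states that a Camina pair with $G/N$ a $p$-group forces a normal $p$-complement, and the proofs there are nontrivial (character-theoretic/transfer arguments), not recoverable from the local analysis you outline. If you import that citation, the remainder of your ``only if'' argument does go through: all elements of $G\setminus N$ are $p$-elements, hence $M\le N$ and $C_M(h)=1$ for $h\in P\setminus(P\cap N)$, which yields both the Frobenius--Wielandt condition and, via $|C_P(h)|=|C_G(h)|=|C_{G/N}(hN)|=|C_{P/(P\cap N)}(h(P\cap N))|$, the Camina pair $(P,P\cap N)$; and $1<M<N$ follows since $N$ is not a $p$-group while $p$ divides $|N|$, which also lets you bypass the paper's appeal to Proposition 1 of \cite{camina}.
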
	

\begin{proof}
Suppose $(G,N)$ is a Camina pair of Type (4).  By Lemma 2.1 (1) of \cite{CMS} or Theorem C of \cite{coprime}, $G$ has a normal $p$-complement $K$.  Note that if $K = N$, then $N$ would be a Frobenius kernel of $G$ (see Proposition 1 of \cite{camina}), and this would violate $(G,N)$ being of Type (4).  It follows that $K < N$.  Let $P$ be a Sylow $p$-subgroup of $G$, and observe that $P \cong G/K$ and $(G/K,N/K)$ is a Camina pair; so $(P,P \cap N)$ is a Camina pair.  Let $x$ be an element of $G \setminus N$.  We know that $x$ is conjugate to every element of $xN$.  Since $G = NP$, we know that $xN$ intersects $P$ nontrivially.  Thus, $x$ is conjugate to an element of $P \setminus P \cap N$.  Hence, every element of $G \setminus N$ is conjugate to an element of $P \setminus P \cap N$.  This implies that $(G,P,P\cap N)$ is a Frobenius-Wielandt triple by Lemma 3.3 of \cite{lewis}.

Conversely, suppose $(G,P,P \cap N)$ is a Frobenius-Wielandt triple, $(P,P \cap N)$ is a Camina pair, and $G$ has a normal $p$-complement $K$ so that $K < N$.  Observe that $P$ is a complement for $K$ in $G$.  Let $x$ be an element of $G \setminus N$.  We show $xn$ is conjugate to some element in $xN \cap P$ for every element $n \in N$.  Notice that $G$ and $K \le N$ satisfy the hypotheses of Theorem 3.5 of \cite{burkett} with $P$ a complement for $K$.  The fact that $(G,P,P \cap N)$ is a Frobenius-Wielandt triple is condition (4) of Theorem 3.5 of \cite{burkett}.  Let $y \in xnK \cap P$, and observe that $xn K = yK$.  Condition (6) of Theorem 3.5 of \cite{burkett} states that $y$ is conjugate to every element of $yK$.  Thus, $xn$ is conjugate to $y \in xnK \cap P \subseteq xnN \cap P = xN \cap P$.  Now, suppose $y \in xN \cap P$.  We next show that $xN \cap P = y(P \cap N)$.  We have $yN = xN$.  We see that $y(P \cap N) \subseteq yN \cap P$.  Suppose $g \in yN \cap P$.  We see that $g = yn$ for some $n \in N$.  Since $yn = g \in P$ and $y \in P$, we have $n \in P$, so $n \in P \cap N$, and thus, $g \in y(P \cap N)$ and so, $g(P \cap N) = y (P \cap N)$.  It follows that $xN \cap P = yN \cap P = y(P \cap N)$.  We have now shown that every element in $xN$ is conjugate to some element in $y (P \cap N)$.  Since $(P,P \cap N)$ is a Camina pair, all elements of $y(P \cap N)$ are conjugate (in $P$).  This implies that all elements of $xN$ are conjugate (in $G$).  Thus, $(G,N)$ is a Camina pair.
\end{proof}

\section{Camina pairs and the Fitting subgroup}

We start working toward a proof of Theorem \ref{intro-Fitting subgroup}.  Note that if $(G,N)$ is a Camina pair of Type (1) (i.e., $G$ is a Frobenius group with Frobenius kernel $N$), then we have $N = F(G)$.  Also, if $(G,N)$ is a Camina pair of Type (2) (i.e. $G$ is a $p$-group), then clearly $N < F(G) = G$.  Therefore, the theorem will follow by showing that if $(G,N)$ is a Camina pair of Type (3) then $N < F(G)$, whereas if $(G,N)$ is a Camina pair of Type (4) then $F (G) < N$.   This will be done in Theorem \ref{type 3 proper} and Corollary \ref{type 4 proper}, respectively.


To prove Theorem \ref{type 3 proper}, we will need some results about nonsolvable Camina pairs that have been proved in \cite{nonsolv} and we use a couple of results regarding equal order pairs in our recent paper \cite{eop}.  Recall that $(G,N)$ is an {\it equal order pair} if $N$ is a proper, nontrivial normal subgroup of $G$ and every coset $aN \ne N$ of $N$ in $G$ has the property that all of the elements of $aN$ have the same order.  It is not difficult to see that Camina pairs have this property, so equal order pairs are a generalization of Camina pairs.

In \cite[Remark~5.5]{eop}, we proved that if $N$ is the natural module for ${\rm SL} (2,2^n)$ and $G = N \rtimes {\rm SL} (2,2^n)$, then $(G,N)$ is not an equal order pair.  In Theorem \ref{type 3 proper}, we will make use of the following generalization of this result.  To prove this result, we need one more definition.  We say the prime $p$ is a {\it primitive prime} for $a^n - 1$ where $a$ and $n$ are positive integers if $p$ divides $a^n-1$ and does not divide $a^k -1$ for all $1 \le k < n$.  (In many places, these are called Zsigmondy prime divisors, see page 94 of \cite{MaWo}.)  In the Zsigmondy prime theorem, Zsigmondy proves that primitive primes exist for all pairs $(a,n)$ except for when $a$ is a Mersenne prime and $n = 2$ and $(a,n) = (2,6)$.  For a nice overview with proofs of the number theory involved with the Zsigmondy prime theorem, we suggest the reader consult Sections 6.1 and 6.2 of \cite{numth}.    

\begin{lemma}\label{eop ext}
Let $G$ be a group, and $N$ an elementary abelian normal $2$-subgroup of $G$ such that $G/N\cong{\rm SL} (2,2^n)$ (for $n\geq 2$) and $C_G(N)=N$. Assume that $N$ is a completely reducible $G/N$-module over ${\rm{GF}}(2^n)$ whose irreducible constituents are all isomorphic to the natural module for ${\rm SL} (2,2^n)$. Then $(G,N)$ is not an equal order pair.
\end{lemma}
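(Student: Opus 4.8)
The plan is to exhibit a single coset of $N$ in $G$ whose elements do not all have the same order, contradicting the equal order property directly. Since $2^n-1$ is odd, ${\rm SL}(2,2^n)$ has trivial centre, and any involution of it is unipotent with minimal polynomial $(x-1)^2$ on the natural module, i.e.\ a transvection. Fix an involution $\bar a\in G/N$. By the hypothesis $C_G(N)=N$, the group $G/N$ acts faithfully on $N$, so $\bar a$ induces on $N$ an operator $\alpha$ of order exactly $2$; writing the elementary abelian $2$-group $N$ additively, we have $\alpha^2=1$, hence $(\alpha+1)^2=0$ and (in characteristic $2$) $\alpha-1=\alpha+1$. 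Recall that conjugation by any preimage in $G$ of $\bar a$ induces $\alpha$ on $N$, and record the identity $(a_0m)^2=a_0^2+(\alpha+1)(m)$, valid for every preimage $a_0$ of $\bar a$ and every $m\in N$ (a one-line computation using that $N$ is abelian, that $a_0^2\in N$, and that $\alpha^{-1}=\alpha$).

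The key step is the equality $\cent{N}{\bar a}=[N,\bar a]$. On the natural module $V$ over ${\rm GF}(2^n)$ the map $\alpha+1$ is nonzero (faithfulness) and squares to zero, hence has rank $1$ on the two-dimensional space $V$; therefore its image $(\alpha+1)V$ is one-dimensional and, being contained in the one-dimensional fixed-point space of $\alpha$ on $V$, coincides with it. Since $N$ is a direct sum of copies of $V$, summing over the constituents gives $\cent{N}{\bar a}=(\alpha+1)N=[N,\bar a]$. Now pick an arbitrary preimage $a_0\in G$ of $\bar a$: then $a_0^2\in N$ and $a_0^2$ commutes with $a_0$, so $a_0^2\in\cent{N}{\bar a}=(\alpha+1)N$; choosing $m\in N$ with $(\alpha+1)(m)=a_0^2$ and setting $a:=a_0m$, the recorded identity gives $a^2=1$, while $aN=\bar a\ne N$, so $a$ is an element of order $2$ in $G\setminus N$. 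Finally, since $\bar a$ acts nontrivially on $N$, choose $v\in N$ with $(\alpha+1)(v)\ne 0$; the recorded identity gives $(av)^2=(\alpha+1)(v)\ne 1$, and $(av)^2\in N$ forces $(av)^4=1$, so $av$ has order $4$. Thus $a$ and $av$ lie in the same coset $aN\ne N$ with orders $2$ and $4$, and $(G,N)$ is not an equal order pair.

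I expect the only genuine content to be the identity $\cent{N}{\bar a}=[N,\bar a]$ — the fact that an involution of ${\rm SL}(2,2^n)$ acts as a single Jordan block on the natural module — together with the small amount of bookkeeping needed to replace an arbitrary preimage of $\bar a$ by one of order $2$. In cohomological language this step merely says $H^2(\langle\bar a\rangle,N)=0$, so that the preimage of $\langle\bar a\rangle$ in $G$ splits over $N$; the elementary computation above sidesteps that machinery, and everything else reduces to routine squaring inside $G$.
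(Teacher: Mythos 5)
Your proof is correct, and it takes a genuinely different route from the paper's. The paper argues by contradiction using a Zsigmondy (primitive) prime divisor $p$ of $2^{2n}-1$: it picks $y$ of order $p$, shows $N_N(\langle y\rangle)=1$ so that $N_G(\langle y\rangle)$ is dihedral and hence contains an involution $x\notin N$, and then observes that if every element of $xN$ had order $2$ the subgroup $\langle x\rangle N$ would be elementary abelian, forcing $x\in C_G(N)\setminus N$. This requires a separate argument for $n=3$, where $2^6-1$ has no primitive prime divisor. Your argument is constructive and purely local to a single transvection $\bar a$: the identity $C_N(\bar a)=[N,\bar a]=(\alpha+1)N$ (valid on the natural module because $\alpha+1$ has rank $1$ and image equal to its kernel on each $2$-dimensional constituent) lets you correct an arbitrary preimage $a_0$ to an honest involution $a=a_0m\in G\setminus N$ via $(\alpha+1)(m)=a_0^2$, and then $av$ has order $4$ for any $v$ with $(\alpha+1)(v)\ne 0$. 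What your approach buys is uniformity (no Zsigmondy exception, hence no $n=3$ case) and a slightly stronger conclusion: you exhibit an explicit bad coset rather than deriving a contradiction. What the paper's approach buys is that it never needs the splitting step $C_N(\bar a)\subseteq[N,\bar a]$ (equivalently $H^2(\langle\bar a\rangle,N)=0$), because by taking $x$ inside the complement $N_G(\langle y\rangle)$ it gets an involution outside $N$ for free; note that your splitting step is genuinely where the hypothesis on the natural module enters, so it cannot be skipped in your version. Both proofs use the faithfulness hypothesis $C_G(N)=N$ in an essential way, and both are complete.
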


\begin{proof} For a proof by contradiction, assume that $(G,N)$ is an equal order pair. We consider first the case $n \neq 3$, so there exists a primitive prime divisor $p$ of $2^{2n} - 1$. Since $G/N$ has elements of order $p$, we can take an element $y \in G$ having order $p$ as well, and we set $Y = \langle y \rangle$. Note that any irreducible constituent $M$ of the $G/N$-module $N$ is a faithful $G/N$-module, hence $Y$ acts faithfully on it.  Note that since $M$ is isomorphic to the natural module, we have $|M| = p^{2a}$.  Moreover, the fact that $p$ is a primitive prime divisor of $|M| - 1$ implies that $M$ is in fact irreducible also as a $Y$-module. Note that $N_N (Y)$ and $Y$ are both normal subgroups of $N_G(Y)$ having coprime orders, hence they centralize each other; in particular, $N_N (Y)$ is a $Y$-submodule of $N$ on which $Y$ acts trivially. But since the $Y$-module $N$ does not have any irreducible constituent that is the trivial module for $Y$, we must have $N_N(Y)=1$. 

Next, observe that $$N_G(Y)=N_G(Y)/N_N(Y)\cong N_G(Y)N/N=N_{G/N}(YN/N)$$ is a dihedral group, thus it contains an element $x$ of order $2$. Since $x$ lies in $G\setminus N$, our assumption that $(G,N)$ is an equal order pair yields that every element in the coset $xN$ has order $2$. But now every element of $\langle x\rangle N$ has order $2$, and therefore this subgroup is abelian. As a consequence, we get the contradiction that $x$ lies in $C_G(N)\setminus N$, and we are done.

It remains to treat the case $n=3$.  In this situation, we will show that we can apply the same argument as above by choosing $y \in G$ as an element such that $yN$ generates a Sylow $3$-subgroup of $G/N$ (which is cyclic of order $9$), and using the fact that $6$ is the smallest positive integer $a$ such that $9$ divides $2^a-1$.  To do this, we need to observe that as above any irreducible constituent $M$ of the $G/N$ module $N$ is a faithful $G/N$-module.  In particular, $|M| = 2^6$. Thus, if we take $Z = \langle y^3 \rangle$, then $Z$ does not centralize all of $M$.   Furthermore, $|[M,Z]| - 1$ is divisible by $9$, and since $|M|$ is the smallest power of $2$ so that $9$ divides $2^a - 1$, we have $M = [M,Z]$.  It follows that $Y$ is a Frobenius complement.  Now, we can use a similar proof as for a primitive prime. The proof is complete. 
\end{proof}



We are now ready to prove Theorem \ref{type 3 proper}.




\begin{theorem} \label{type 3 proper}
If $(G,N)$ is a Camina pair of Type (3), 
then $N < F(G) = O_p (G)$.
\end{theorem}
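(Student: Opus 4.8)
The plan is the following. First, observe that $F(G)=O_p(G)$: since $(G,N)$ is a Camina pair, $N$ is a waist of $G$, so the normal $p'$-subgroup $O_{p'}(G)$ either contains $N$ or is contained in $N$; the former is impossible because $N$ is a nontrivial $p$-group, and the latter forces $O_{p'}(G)=1$, being then a $p'$-subgroup of the $p$-group $N$. As $F(G)$ is the direct product of the $O_r(G)$ over the primes $r$ dividing $|G|$, and $O_r(G)\le O_{p'}(G)=1$ whenever $r\ne p$, we get $F(G)=O_p(G)$, which contains the normal $p$-subgroup $N$. So it remains to prove that the inclusion is proper; I argue by contradiction, assuming $N=O_p(G)$.

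The next step uses the elementary observation, which I would either quote or prove in a line: \emph{if $(G,N)$ is a Camina pair and $G$ splits over $N$, say $G=N\rtimes K$, then $G$ is a Frobenius group with kernel $N$}. Indeed, for $1\ne u\in K$ a $G$-conjugate of $u$ lying in the coset $uN$ must be obtained by conjugating $u$ by an element whose image in $K$ centralizes $u$, and such a conjugation reduces to conjugation by an element of $N$; hence $u^G\cap uN=u^N$, so the Camina condition $uN\subseteq u^G$ forces $u^N=uN$, i.e.\ $C_N(u)=1$, and therefore $K$ acts fixed-point-freely on $N$. Granting this, $N=O_p(G)$ first gives $O_p(G/N)=1$, because the preimage in $G$ of $O_p(G/N)$ is a normal $p$-subgroup of $G$, hence lies in $O_p(G)=N$. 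It also gives $O_{p'}(G/N)=1$: otherwise, letting $L\le G$ be the preimage of $O_{p'}(G/N)$, the subgroup $N$ is a normal Sylow $p$-subgroup of $L$, so $L=N\rtimes H$ by Schur--Zassenhaus (and $L$ is a Frobenius group with kernel $N$, $H$ being a $p'$-subgroup of $G$), whence a Frattini argument yields $G=N\rtimes N_G(H)$; then $(G,N)$ is a split Camina pair, hence of Type (1), contradicting that $(G,N)$ is of Type (3) (the refined types being mutually exclusive). Therefore $F(G/N)=O_p(G/N)\times O_{p'}(G/N)=1$; in particular $G/N$ is nonsolvable.

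It now remains to contradict the existence of a Camina pair of Type (3) whose kernel $N$ is a $p$-group with $N=F(G)$ and $G/N$ nonsolvable. Here I would invoke the structural description of nonsolvable Camina pairs from \cite{nonsolv}, which forces $p=2$ and pins down the composition structure of $G/N$ in terms of ${\rm SL}(2,2^n)$ together with the decomposition of $N$ as a module. Replacing $G$ by $G/\Phi(N)$ if necessary---this is again a Camina pair of Type (3), with $N/\Phi(N)$ elementary abelian, and still $N/\Phi(N)=O_2(G/\Phi(N))=F(G/\Phi(N))$ by the argument of the first paragraph together with $N=O_p(G)$---and then cutting down to the section of $G$ affording ${\rm SL}(2,2^n)$, one is placed exactly in the hypotheses of Lemma \ref{eop ext}: a group with a self-centralizing elementary abelian normal $2$-subgroup and quotient ${\rm SL}(2,2^n)$ acting so that every composition factor of the module is natural. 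Lemma \ref{eop ext} then says such a pair is not an equal order pair, which is absurd since Camina pairs are equal order pairs. This contradiction shows $N<F(G)=O_p(G)$.

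The part I expect to be delicate is this last paragraph: one must extract from the classification in \cite{nonsolv} precisely the ingredients of Lemma \ref{eop ext}---that after the reductions the relevant normal $2$-subgroup is self-centralizing, that the quotient is genuinely ${\rm SL}(2,2^n)$ rather than merely a group having it as a section, and that the module is completely reducible with all constituents natural. Everything preceding that is routine once the ``split implies Frobenius'' remark is available.
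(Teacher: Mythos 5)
Your overall strategy coincides with the paper's at the decisive step: reduce to the case where $G/N$ is nonsolvable with $O_p(G/N)=O_{p'}(G/N)=1$, extract the ${\rm SL}(2,2^n)$ structure and the natural-module decomposition of $N$ from \cite{nonsolv}, and contradict Lemma \ref{eop ext} using the facts that Camina pairs are equal order pairs and that the equal-order property passes to subgroups containing $N$. Your opening observation that the waist property forces $F(G)=O_p(G)\ge N$ is correct and clean, and your ``split implies Frobenius'' argument is a correct rederivation of Theorem 4.5 of \cite{CM}. There are, however, three concrete gaps. First, your proof that $O_{p'}(G/N)=1$ needs $N\cap N_G(H)=1$ to obtain the splitting $G=N\rtimes N_G(H)$; the Frattini argument only gives $G=N\,N_G(H)$ together with $N\cap N_G(H)\le C_N(H)$, and $C_N(H)=1$ is precisely the nontrivial assertion that $NH$ is a Frobenius group, which you state parenthetically without justification. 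That assertion is a theorem of Chillag and MacDonald for Type (3) Camina pairs, not a formal consequence of the definition (in an extraspecial $p$-group with $N=Z(G)$ one has $C_N(x)=N$ for every $x\notin N$); the paper sidesteps the issue by quoting Lemma 4.4 of \cite{CM} directly for $O_{p'}(G/N)=1$.

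Second, and most substantially, you defer the reduction to $p=2$ entirely to \cite{nonsolv}. The paper does not: it proves $p=2$ by a separate argument (a minimal normal subgroup of $G/N$ is a product of nonabelian simple groups, so $2$ divides $|G:N|$ and $G$ has an involution $x$; the Frobenius action of $\langle x\rangle$ on $N$ forces $x$ to invert $N$, hence to induce a central automorphism, so $N\langle x\rangle\trianglelefteq G$; Frattini then complements $N$ in $G$, and complemented Camina pairs are Frobenius, a contradiction with Type (3)). Theorem 1.3 and Corollary 1.4 of \cite{nonsolv} are invoked in the paper only after $p=2$ is already in hand, so you cannot simply cite them to obtain $p=2$; as written this step is missing. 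Third, your reduction to $G/\Phi(N)$ produces an elementary abelian kernel but not the complete reducibility, with all irreducible constituents natural, demanded by Lemma \ref{eop ext}; since the characteristic is $2$ and divides $|{\rm SL}(2,2^n)|$, Maschke's theorem does not supply it. The paper instead runs an induction on $|G|$ at the outset to reduce to $N$ minimal normal, and then quotes Corollary 1.4 of \cite{nonsolv} for the module structure and Proposition 2.2 of \cite{nonsolv} for $C_G(N)=N$. You correctly flag this last cluster as the delicate point, but as written it remains unverified.
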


\begin{proof}
We work by induction on $|G|$.   Suppose there exists $1 < M < N$ so that $M$ is normal in $G$.  We know that $(G/M,N/M)$ is a Camina pair of Type (3).  Let $F/M = F(G/M)$.  By the inductive hypothesis, we have $N/M < F/M$ and $F/M = O_p (G/M)$.  Since $M$ is a $p$-group, it is not difficult to see that $F = F(G) = O_p (G)$, and we have $N < F (G)$ as desired.  Thus, we may assume that $N$ is a minimal normal subgroup of $G$.

By Lemma 4.4 of \cite{CM}, we have $O_{p'} (G/N) = 1$.  
If $O_p (G/N) > 1$, then $F(G/N) = O_p (G/N) = O_p (G)/N$, and so, $N < O_p (G)$ and $F(G) = O_p (G)$.  Thus, we may assume $O_p(G/N) = 1$, so that $G/N$ is not $p$-solvable.  
We work to find a contradiction.  This yields $N = F(G) = O_p (G)$, and if $M/N$ is a minimal normal subgroup of $G/N$, then $M/N$ is nonabelian and has order divisible by $p$.  In particular, $M/N$ is a direct product of some nonabelian simple group.  This implies that $2$ divides $|G:N|$.

Suppose $p$ is odd.  Since $2$ divides $|G:N|$, we see that $G$ contains an involution $x$.  Since $N \langle x \rangle$ must be a Frobenius group, we see that $x$ inverts every element of $N$.  This implies $x$ corresponds to a central automorphism of $N$.  By Proposition 2.2 of \cite{nonsolv}, we know that $N = C_G (N)$.   It follows that $N \langle x \rangle$ is normal in $G$.  By the Frattini argument, we have $G = N N_G (\langle x \rangle)$ and it easy to see that $N \cap N_G (\langle x \rangle) = 1$.  This implies that $N$ is complemented in $G$.   We know (Theorem 4.5 of \cite{CM}) that a complemented Camina pair must be a Frobenius group, and we have a contradiction.  

Thus, we must have $p = 2$.  
We now have $G/N$ is nonsolvable with $O_{2'} (G/N) = 1$ and $O_2 (G/N) =1$.  Thus, we may use Theorem 1.3 of \cite{nonsolv} to see that the solvable residual of $G/N$ (which we denote by $(G/N)^\infty$) must be ${\rm SL}(2, 2^n)$ for $n \ge 2$.  Furthermore, by Corollary 1.4 of \cite{nonsolv}, we have that $N$ will be a direct sum of natural modules for $(G/N)^\infty$. Now, by Lemma~\ref{eop ext}, we have that the extension of $N$ by $(G/N)^\infty$ does not yield an equal order pair together with $N$.

On the other hand, we note following the definition of equal order pairs in \cite{eop} that Camina pairs are equal order pairs and using Lemma 2.1 of that paper, we see that if $X/N = (G/N)^\infty$, then $(X,N)$ is an equal order pair.  But this contradicts the  above observation, and  
the result is proved.
\end{proof}

We now refine the structure of these groups.  In particular, we obtain more information about Camina pairs of Type (4).  We note that by Lemma 2.1 (3) of \cite{CMS} we know that $N$ in this situation is solvable. 

\begin{lemma} \label{O_p (G)}
Let $(G,N)$ be a Camina pair of Type (4).  Suppose $p$ is the prime so that $G/N$ is a $p$-group and take $P$ to be a Sylow $p$-subgroup of $G$.
Then $O_p (G) = C_P (O_{p'} (G)) = \cap_{v \in O_{p'} (G)} C_P (v) < P \cap N$.
\end{lemma}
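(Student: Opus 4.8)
The plan is to work inside $G$ with the normal $p$-complement $K$ that we obtain from Lemma~2.1(1) of \cite{CMS} (as in the proof of Theorem~\ref{G/N p-group}), noting that $K = O_{p'}(G)$ since $K$ is a normal $p'$-subgroup and $G/K \cong P$ is a $p$-group. Thus $O_{p'}(G) = K < N$ (the strict inclusion being part of Type~(4)), and $P$ is a complement to $K$ in $G$. The subgroup $O_p(G)$ is a normal $p$-subgroup of $G$, hence $O_p(G) \le P$ after conjugating, and since $G = KP$ with $K \cap P = 1$ one checks $O_p(G) \le P \cap N$ automatically (any normal $p$-subgroup of a Type~(4) Camina pair lies in $N$, because $G/N$ acting on $G$... more directly: $O_p(G)N/N$ is a normal subgroup of the $p$-group $G/N$, and if it were nontrivial it would meet the center, contradicting that $(G,N)$ is a Camina pair — any $x \in G\setminus N$ with $xN$ central in $G/N$ of order $p$ would be centralized modulo $N$ by all of $G$, forcing $|C_G(x)|$ to be large, but a cleaner route is that a Camina kernel is a waist, so $O_p(G) \le N$ or $N \le O_p(G)$, and the latter fails since $N$ is not a $p$-group).

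**The three equalities.** First I would show $O_p(G) = C_P(K)$. Since $O_p(G)$ is normal in $G = KP$ and $K$ is a $p'$-group, $[O_p(G), K]$ is a normal $p$-subgroup contained in $K$, hence trivial, so $O_p(G) \le C_P(K)$. Conversely, $C_P(K)$ is normalized by $P$ (as $K \trianglelefteq G$ so $C_P(K)$ is $P$-invariant) and centralized by $K$, hence $C_P(K) \trianglelefteq G$; being a $p$-group it lies in $O_p(G)$. This gives $O_p(G) = C_P(K) = C_P(O_{p'}(G))$, and the displayed intersection description $\cap_{v \in K} C_P(v)$ is just the definition of $C_P(K)$.

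**The strict inclusion $C_P(K) < P \cap N$.** This is the main obstacle and where the Camina hypothesis must really be used. That $C_P(K) \le P\cap N$ follows from $O_p(G)\le P\cap N$ established above. For strictness, suppose $C_P(K) = P \cap N$. Then every element of $P \cap N$ centralizes $K$; combined with $N = K(P\cap N)$ and $K$ abelian-or-not this should force $K \trianglelefteq N$ with $P \cap N$ acting trivially, so $N = K \times (P\cap N)$ and in particular $P \cap N$ is normal in $G$ (normalized by $P$ since it is $C_P(K)$, and centralized by $K$). Pick $x \in P \setminus N$ (possible as $G/N$ is a nontrivial $p$-group and $P$ surjects onto it). By the Camina condition $x$ is conjugate in $G$ to $xn$ for all $n \in N$; restricting to $n$ ranging over $K$ — but $K\cap xN = \emptyset$ — so instead take $n \in P\cap N$: then $x$ is conjugate to $x(P\cap N)$, forcing $|C_G(x)| = |C_{G/N}(xN)|$ by Lemma~\ref{cam equiv}(2). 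One then derives a contradiction with $N = K \times (P \cap N)$ by a counting or fixed-point argument, e.g.\ showing $C_G(x)$ meets $N$ nontrivially (it contains $C_{P\cap N}(x)$, and since $P\cap N$ is a $p$-group acted on by the $p$-element $x$ this fixed-point set is nontrivial) which contradicts $(G,N)$ being a Camina pair (a Camina kernel intersects every centralizer of an outside element trivially). The clean statement to cite is that for a Camina pair $(G,N)$ and $x \in G\setminus N$ one has $C_G(x) \cap N = 1$, which follows immediately from $|C_G(x)| = |C_{G/N}(xN)|$ together with $C_{G/N}(xN) \supseteq C_G(x)N/N$; since $C_{P\cap N}(x) \ne 1$ sits inside $C_G(x)\cap N$, we reach a contradiction, so $C_P(K) \ne P \cap N$ and the inclusion is strict.

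**Remark on the delicate point.** The step I expect to require the most care is verifying $C_{P\cap N}(x) \neq 1$ — i.e.\ that once $P \cap N$ is normal in $G$ (equivalently, $N$ splits as $K \times (P\cap N)$), some nontrivial element of $P \cap N$ is centralized by a chosen $x \in P \setminus N$. This is a standard coprime/$p$-group fixed-point fact ($x$ is a $p$-element normalizing the nontrivial $p$-group $P\cap N$, so it has nontrivial fixed points on it), but one must be sure the reduction to $P \cap N$ being normal is airtight, which in turn rests on $C_P(K) = P \cap N$ forcing $P\cap N$ to centralize $K$ and hence $N = K \rtimes (P\cap N)$ to be a direct product — this uses $K = O_{p'}(G) \trianglelefteq N$.
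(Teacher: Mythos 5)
Your setup and the three equalities are correct and essentially match the paper: $O_{p'}(G)$ is the normal $p$-complement, $[O_p(G),O_{p'}(G)]=1$ gives $O_p(G)\le C_P(O_{p'}(G))$, and $C_P(O_{p'}(G))$ being normal in $G=O_{p'}(G)P$ gives the reverse containment. The containment in $P\cap N$ also goes through (the paper derives it from the fact that $C_G(x)$ is a $p$-group for $x\in G\setminus N$; your route via the waist property works as well).

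The strictness argument, however, has a genuine gap, located exactly at the step you call the ``clean statement.'' The claim that $C_G(x)\cap N=1$ for every $x\in G\setminus N$ in a Camina pair is false: if $G$ is extraspecial and $N=Z(G)$, then $(G,N)$ is a Camina pair and $C_G(x)\cap N=N\neq 1$ for every $x$. Your derivation is also invalid: from $|C_G(x)|=|C_{G/N}(xN)|$ and $C_G(x)N/N\le C_{G/N}(xN)$ one gets $|C_G(x)|\ge |C_G(x)|/|C_G(x)\cap N|$, i.e.\ $|C_G(x)\cap N|\ge 1$, which is vacuous; to conclude $C_G(x)\cap N=1$ you would need the reverse containment $C_{G/N}(xN)\le C_G(x)N/N$, which is not available and is essentially the point at issue. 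In the Type (4) setting the equality $|C_G(x)|=|C_{G/N}(xN)|$ only forces $C_G(x)$ to be a $p$-group, so it excludes nontrivial $p'$-elements of $N$ from $C_G(x)$ but says nothing about $p$-elements of $P\cap N$; note the lemma asserts only $O_p(G)<P\cap N$, not $O_p(G)=1$, and whenever $O_p(G)\neq 1$ the subgroup $O_p(G)\cap Z(P)$ is centralized by both $P$ and $O_{p'}(G)$, hence lies in $Z(G)\cap N\subseteq C_G(x)\cap N$. Consequently, exhibiting a nontrivial element of $C_{P\cap N}(x)$ yields no contradiction, and the proof of $C_P(K)\neq P\cap N$ collapses. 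The paper's argument for strictness is of a different nature: assuming $O_p(G)=P\cap N$ and reducing to the case where this subgroup is minimal normal, it observes that $(G/(P\cap N),N/(P\cap N))$ is a Camina pair with $|N:(P\cap N)|$ coprime to $|G:N|$, hence $G/(P\cap N)$ is a Frobenius group, so $P/(P\cap N)$ is cyclic or generalized quaternion and $P\cap N\le Z(P)$; a nontrivial linear character of $P\cap N$ then extends to $P$ and further to a linear character in $\irr{G\mid N}$, contradicting the vanishing condition of Lemma~\ref{cam equiv}(3). Some input of this kind is needed; the centralizer count alone cannot deliver the strict inequality.
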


\begin{proof}
We know by Theorem C of \cite{coprime} or Lemma 2.1 (1) of \cite{CMS} that since $(G,N)$ is a Camina pair of Type (4), $G$ has a normal $p$-complement.  Thus, $O_{p'} (G)$ is the normal $p$-complement of $G$, and so, it is the normal $p$-complement of $N$.  In particular, $G = O_{p'} (G) P$.  We know if $x \in G \setminus N$, then $|C_G (x)| = |C_{G/N} (xN)|$, so $C_G (x)$ is a $p$-group for all $x \in G \setminus N$.  Hence, if $v \in O_{p'} (N) \setminus \{ 1 \}$, then $C_G (v) \le N$, and hence, $C_P (v) \le P \cap N$.  We see that $\cap_{v \in O_{p'} (G)} C_P (v) \le P \cap N$.  We have that $C_G (O_{p'} (G)) = \cap_{v \in O_{p'} (G)} C_G (v)$ and $C_P (O_{p'} (G)) = P \cap C_G (O_{p'} (G)) = P \cap (\cap_{v \in O_{p'} (G)} C_G (v)) = \cap_{v \in O_{p'} (G)} P \cap C_G (v) = \cap_{v \in O_{p'} (G)} C_P (v)$.  

Clearly, $O_p (G) \le C_P (v)$ for all $v \in O_{p'} (G)$, so $O_p (G) \le \cap_{v \in O_{p'} (G)} C_P (v)$.  On the other hand, $C_G (O_p' (G))$ is normal in $G$, so $C_P (O_{p'} (G))$ is normal in $P$.  Also, $O_{p'} (G)$ centralizes and so normalizes $C_P (O_{p'} (G))$.  Since $G = O_{p'} (G) P$, we conclude that $C_P (O_{p'} (G))$ is normal in $G$, and so, $C_P (O_{p'} (G)) \le O_p (G)$.  We conclude that $C_P (O_{p'} (G)) = O_p (G)$.  

It remains to show that $O_p (G) \ne P \cap N$.  To do this, we may suppose $P \cap N = O_p (G)$ and $P \cap N$ is a minimal normal subgroup of $G$.  This implies that $(G/(P \cap N),N/(P \cap N))$ is Camina pair.  Since $(|N:(P \cap N)|,|G:N|) = 1$, this implies that $G/(P \cap N)$ is a Frobenius group.  Hence, $P/(P \cap N)$ is either a cyclic group or a generalized quaternion group.  

Recall that $G = O_{p'} (G) P$.  Since $O_p (G) > 1$ and $O_p (G)$ is normal in $P$, we have $O_p (G) \cap Z(P) > 1$.  Obviously, both $P$ and $O_{p'} (G)$ centralize $O_p (G) \cap Z(P)$.  Hence, $O_p (G) \cap Z(P)$ is central in $G$.  The minimality of $O_p (G) = P \cap N$ implies that $O_p (G) = O_p (G) \cap Z(P)$, and so, $O_p (G) \le Z(P)$.    

Let $1 \ne \lambda \in \irr {P \cap N}$.  Then $\lambda$ extends to $\theta \in \irr P$ since $P/(P \cap N)$ is either cyclic or generalized quaternion.  It follows that $1_{O_{p'} (N)} \times \lambda \in \irr N$ will extend to $\chi \in \irr G$ and this is a contradiction since $\chi \in \irr {G \mid N}$ will be linear, and thus, it will not vanish on $G \setminus N$.  This is a contradiction.  Thus, we must have $O_p (G) < P \cap N$.  
\end{proof}

We are now able to show that if $(G,N)$ is a Camina pair of Type~(4), then $F(G) < N$.

\begin{corollary} \label{type 4 proper}
If $(G,N)$ is a Camina pair of Type (4),
then $F(G) < N$.  
\end{corollary}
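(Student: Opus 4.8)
The plan is to derive the chain of inequalities $F(G) < N$ from the structural information already in hand, chiefly Lemma~\ref{O_p (G)}. First I would recall that since $(G,N)$ is of Type~(4), the quotient $G/N$ is a $p$-group and $N$ is solvable (as noted before Lemma~\ref{O_p (G)}), so $G$ is solvable; in particular $F(G) = O_p(G) \times O_{p'}(G)$. Since $G$ has a normal $p$-complement $K = O_{p'}(G)$ which is properly contained in $N$ (this is exactly the content recalled at the start of the proof of Lemma~\ref{O_p (G)}), we have $O_{p'}(G) = K < N$. So the task reduces to controlling the $p$-part: I must show $O_p(G) < N$, and combine the two to get $F(G) = O_p(G) \, O_{p'}(G) < N$.

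For the $p$-part, Lemma~\ref{O_p (G)} gives directly $O_p(G) < P \cap N \le N$, where $P$ is a Sylow $p$-subgroup of $G$. That settles $O_p(G) < N$ outright. Then, since $O_{p'}(G) = K < N$ as well, and $F(G) = O_p(G) \times O_{p'}(G)$ with both factors contained in $N$, we get $F(G) \le N$; moreover the inclusion is proper because $O_p(G) < P \cap N$ is strict, so $P \cap N$ contains an element of $O_p(N) \setminus O_p(G)$, which together with $O_{p'}(G)$ shows $|F(G)| = |O_p(G)|\,|O_{p'}(G)| < |P \cap N|\,|O_{p'}(G)| \le |N|$. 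Hence $F(G) < N$.

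The main obstacle is essentially bookkeeping rather than a genuine difficulty: one must be careful that $F(G)$ really does decompose as $O_p(G) \times O_{p'}(G)$ (which holds because $G$ is solvable, indeed $p$-solvable, and $F(G)$ is nilpotent), and that the strict containment $O_p(G) < P \cap N$ from Lemma~\ref{O_p (G)} genuinely forces $|F(G)| < |N|$ and not merely $F(G) \le N$. Since $N = (P \cap N)\,O_{p'}(G)$ with $P \cap N$ a Sylow $p$-subgroup of $N$ and $O_{p'}(G)$ its normal $p$-complement, the index $|N : F(G)| = |P \cap N : O_p(G)| > 1$, giving the strict inequality. No new result beyond Lemma~\ref{O_p (G)} and the elementary structure theory of solvable groups is needed.
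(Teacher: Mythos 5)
Your route differs from the paper's and, after one repair, it works. The paper's proof is shorter and leans on the waist property of $N$: from Lemma~\ref{O_p (G)} one gets $O_p(G) < P \cap N$, which shows $N$ cannot be nilpotent (otherwise $P\cap N = O_p(N) = O_p(G)$), and then the waist property forces $F(G) \le N$ with the containment proper since $F(G)$ is nilpotent and $N$ is not. You instead sandwich $F(G)$ inside $O_p(G)\,O_{p'}(G)$ and count orders against $|N| = |P\cap N|\,|O_{p'}(G)|$. Your version is more self-contained (it never invokes the waist result from \cite{waists}), at the cost of needing the extra facts that $O_{p'}(G) < N$ and that $O_{p'}(G)$ is the normal $p$-complement of $N$; both are indeed available from the proofs of Theorem~\ref{G/N p-group} and Lemma~\ref{O_p (G)}.

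The one genuine flaw is the assertion that $F(G) = O_p(G) \times O_{p'}(G)$ ``because $G$ is solvable and $F(G)$ is nilpotent.'' Solvability does not give this: $O_{p'}(G)$ need not be nilpotent and hence need not lie in $F(G)$ (e.g.\ $G = S_4 \times C_5$ with $p=5$ has $O_{5'}(G) = S_4 \not\le F(G)$). Fortunately your argument only uses the containment $F(G) \le O_p(G)\,O_{p'}(G)$, which is automatic: $F(G)$ is nilpotent, its Sylow $p$-subgroup is $O_p(F(G)) = O_p(G)$, and its Hall $p'$-part $O_{p'}(F(G))$ is a normal $p'$-subgroup of $G$, hence lies in $O_{p'}(G)$. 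With that substitution the estimate $|F(G)| \le |O_p(G)|\,|O_{p'}(G)| < |P\cap N|\,|O_{p'}(G)| = |N|$ goes through and the strict inclusion follows. (Also, your phrase ``an element of $O_p(N)\setminus O_p(G)$'' is vacuous, since $O_p(N) = O_p(G)$ here; what you mean, and what Lemma~\ref{O_p (G)} gives, is an element of $(P\cap N)\setminus O_p(G)$.)
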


\begin{proof}
By Lemma \ref{O_p (G)}, we know that $O_p (G) < P \cap N$ where $P$ is a Sylow $p$-subgroup.  If $N$ is nilpotent, we have $P \cap N = O_p (N) = O_p (G)$ which is a contradiction.  Therefore, as $N$ is a waist, we conclude that $F(G) < N$.	
\end{proof}

\end{document}